\documentclass{amsart}
\usepackage{amsmath}
\usepackage{amssymb}
\usepackage{graphicx,float} 

\newtheorem{theorem}{Theorem}[section]

\newtheorem{lemma}{Lemma}[section]


\def\ad#1{\begin{aligned}#1\end{aligned}}  \def\b#1{{\mathbf{#1}}}
\def\a#1{\begin{align*}#1\end{align*}} \def\an#1{\begin{align}#1\end{align}} 
 \def\t#1{{\operatorname{#1}}}
  \numberwithin{equation}{section}


\begin{document} \baselineskip=16pt\parskip=4pt

\title[finite element]
 {Convergent finite elements on arbitrary meshes, the WG method}
  
\author { Ran Zhang }
\address{School of Mathematics, Jilin University, Changchun, 130012, China}
\email{zhangran@jlu.edu.cn}

\author { Shangyou Zhang }
\address{Department of Mathematical             Sciences, University
     of Delaware, Newark, DE 19716, USA. }
\email{szhang@udel.edu }
  

\subjclass{Primary, 65N15, 65N30}

\keywords{discontinuous finite element, maximum angle condition, mixed finite element, 
  Stokes equations, tetrahedral grid.}

\begin{abstract} On meshes with the maximum angle condition violated, the
  standard conforming, nonconforming, and discontinuous Galerkin finite elements
  do not converge to the true solution when the mesh size goes to zero.
  It is shown that one type of weak Galerkin finite element method converges
     on triangular and tetrahedral meshes violating the maximum angle condition,
     i.e., on arbitrary meshes.
  Numerical tests confirm the theory.
 
\end{abstract}

\maketitle

\section{Introduction}
In one-dimensional finite element computation, the discrete solutions converge to the
  true solution as long as the size of the largest interval goes to zero, no matter how
  the mesh points are placed.  
But it is no longer true in  higher dimensions.
A famous example was found by Babu\v{s}ka and Aziz in \cite{Babuska} that the
  finite element solution does not converge to the true solution on some
   patterned triangular meshes (cf. Figure \ref{f-c-c}(A))
   which violate the maximum angle condition.
   
The problem can be illustrated by two examples in Calculus.
When one approximates a circle by polygons,  no matter how the interpolation points are placed,
  the arc-length of the polygon converges to the circumference of the circle, cf. Figure \ref{f-c-c}(E),
    as long as the longest edge goes to zero.
In Figure \ref{f-c-c}(B), one does the problem in two dimensions,  
  interpolating a cylinder by linear triangles with a horizontal size $h$ and a
  vertical size $h^2$.  When $h\to 0$, though the distance between the cylinder and the triangles 
    goes to zero, 
  the total area of triangles converges to a bigger number different from the surface area of the cylinder.
One can see that the small triangles keep an angle to the cylinder.
In other words, they do not approach the tangent planes to the surface.

\begin{figure}[H]
 \begin{center}\setlength\unitlength{1.0pt}
\begin{picture}(360,225)(-10,0)
  \put(150,0){\includegraphics[width=180pt]{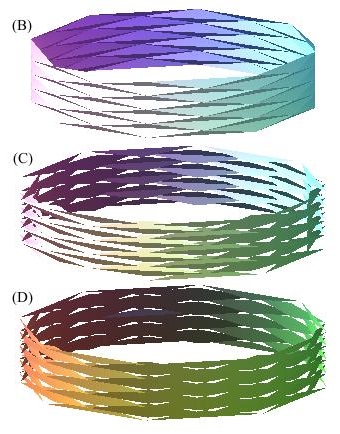}} 
  \put(25,28){\includegraphics[width=140pt]{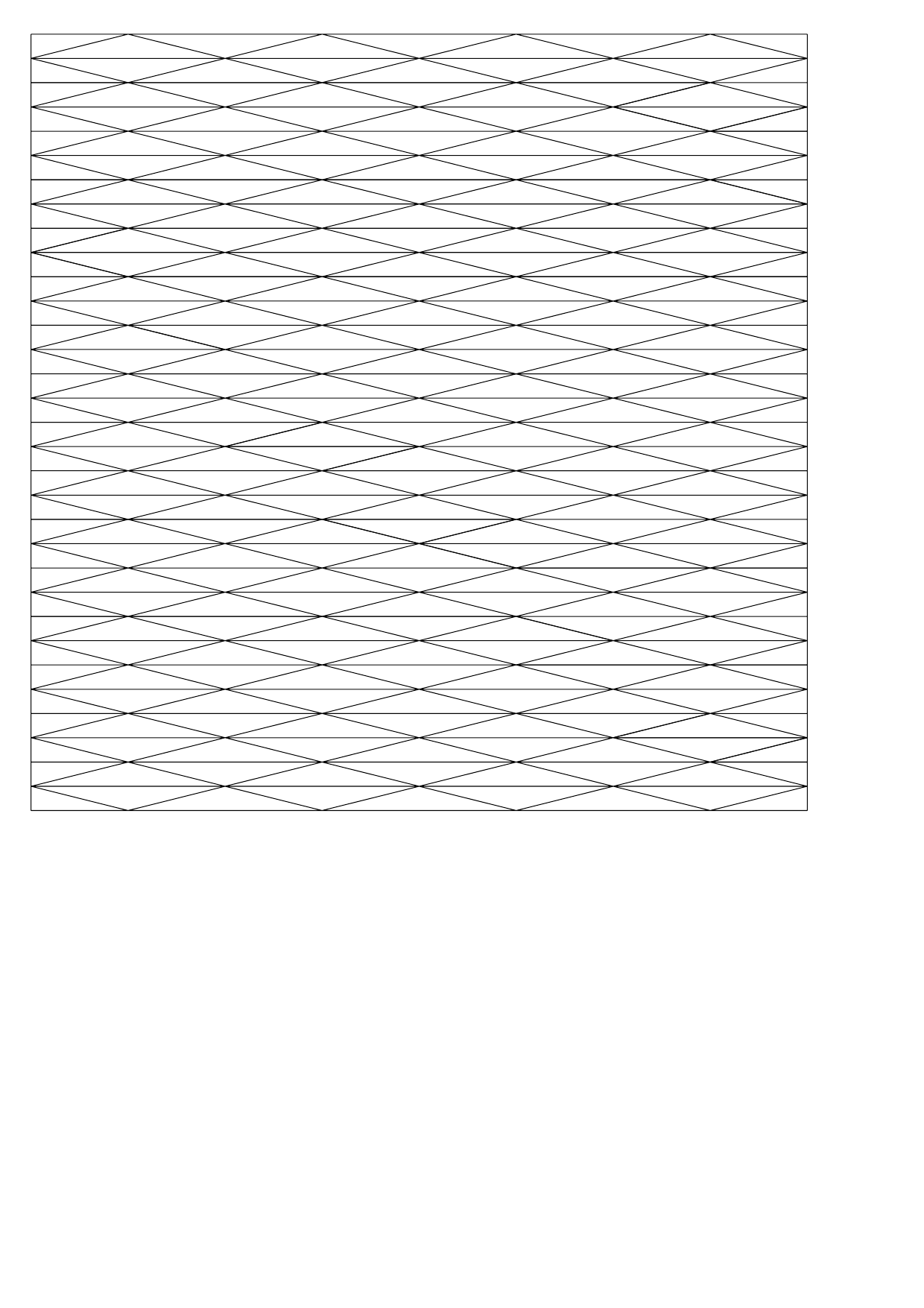}}
  \put(35,10){\includegraphics[width=90pt]{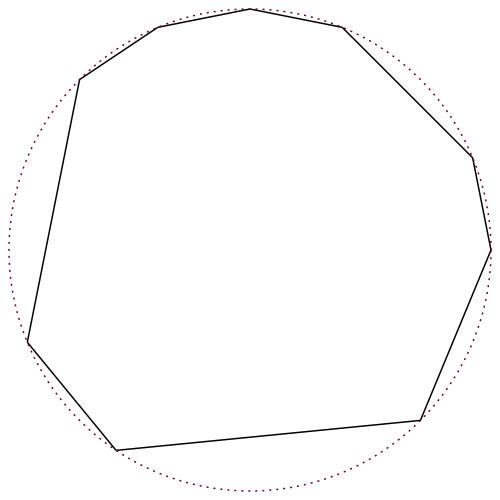}} \put(15,200){\small(A)}\put(23,80){\small(E)}
 \end{picture}\end{center}
\caption{(A) A triangular mesh violates the maximum angle condition, when $h\to 0$.
  \quad (B) The area of triangles (interpolating at three vertices) converges to different a number.  
  \quad (C) The area of triangles (interpolating at three mid-edge points) converges to different a number. 
  \quad (D) The area of triangles ($L^2$-projection) converges to that of the cylinder. 
  \quad (E) The arc-length of line segments converges to that of the circle when mesh size goes to zero. }
\label{f-c-c}
\end{figure}

In addition to continuous finite element methods, there is a type of elements \cite{Crouzeix},
   called nonconforming
  finite elements, whose functions are continuous up to the $P_{k-1}$ order, i.e.,
    the jump of two $P_k$ polynomials on two sides of an edge 
     is $L^2$-orthogonal to the space of $P_{k-1}$ polynomials on the edge.
Translating this method to the above cylinder approximation problem,
   one does the $P_1$ interpolation at three mid-edge points, instead of the
     three vertex points, cf. Figure \ref{f-c-c}(C).
Though we have a slightly better approximation,
   the problem is not solved as the area of triangles still
     does not converge to that of the cylinder.
Similarly, the non-conforming finite element solutions do not converge to the
true solution on such meshes.

But the problem of approximating the cylinder by triangles can be easily solved if we further
  move the three interpolation points toward the center as the resulting secant triangles become almost 
   tangent triangles, cf. Figure \ref{f-c-c}(D).
Or better, we can uses the closest triangles on the mesh, i.e., the $P_1$ $L^2$-projection of
   the cylinder function. 
Then the total area of triangles would converge to the area of the cylinder.
Applying the idea to the finite element methods, can we allow arbitrary meshes in
  the discontinuous Galerkin (DG) 
  (discontinuous piecewise polynomials \cite{Arnold, Arnold2,Babuska73}) method?
Instead of the above mentioned local $L^2$-projection, 
  the finite element solution is the global $H^1$-projection when solving the Poisson equation.
In DG methods, inter-element integrals and penalties are introduced 
   to control the inconsistency caused by the inter-element discontinuity.
Hence the finite element solutions are almost
  continuous and fail to converge to the correct solution if the maximum angle condition does not hold.
 
Can we make the discontinuous solutions less ``continuous", for example,
   close to the element-wise $L^2$-projection, so that the method works on arbitrary meshes? 
The answer is yes.  
We know that the $L^2$-projection converges on arbitrary meshes.
We need a discontinuous finite element method for which the $H^1$ projection is super-close
  to the $L^2$-projection.

A type of superconvergent, weak Galerkin (WG) finite element methods 
  \cite{Al-Taweel, Mu,
   yz1,yz2,yz3, yz4,yz5,yz6} is designed recently, 
   where there is no inter-element integral, neither any penalty term,
   in the discrete equation.
The finite element solution is almost the $L^2$-projection. 
In this work,  we show that this $P_k$ WG finite element method produces two order
  superconvergent solutions on arbitrary triangular and tetrahedral meshes.

One may argue that no such bad meshes are and will ever be used.
In fact, it is difficult for us to code one example of such degenerate tetrahedral meshes.
But, for example, to avoid one bad-shape tetrahedron, a mesh generator or an adaptive algorithm would subdivide
  a large region excessively to very tiny tetrahedra, even to reach the computer limit.
If we do not need this maximum angle condition,  then we can allow some isolated bad-shape 
  elements to save computation.
That is, the mesh quality index would be revised, dropping few bad quality elements.

This work is significant in mathematics, as it is the first time ever we can guarantee the
  convergence of finite element solutions on meshes without any condition.
The analysis is   different somewhat from the typical error estimate in the finite element method.
For example, the inter-element error is converted to other errors inside elements.
The local interpolation and projection are introduced directly to the error estimation
  of the $H^1$-projection, while they are typically used to show the subspace has
   the required global approximation property only.
Also a gradient-preserving Zhang-Zhang transformation is defined, 
  which is like a scalar version of inverse (divergence-preserving) Piola transformation.

\section{The weak Galerkin finite element method}

Let $\mathcal{T}_h=\{T \}$ be an arbitrary triangular or tetrahedral mesh of size $h$ 
    on a polygonal or a polyhedron domain $\Omega$.  
Let $\mathcal E_h=\{e\}$ be the set of all edges or face-triangles in $\mathcal T_h$.
Let the WG finite element space on the mesh $\mathcal T_h$ be
\an{\label{V-h} \ad{
    V_h = \{ v_h=\{v_0,v_b\} : v_0&\in L^2(\mathcal T_h), \ v_0|_T \in P_k(T), \\
                               v_b&\in L^2(\mathcal E_h), \ v_b|_e\in P_{k+1}(e), \
                                v_b|_{\partial\Omega}=0 \}, } }
where $P_1(T)$ denotes the space of polynomials of degree $1$ or less on $T$.
We note that $v_h$ is a generalized function and that $v_b$ is single-valued on $e$ between two
   elements.

 We define the weak gradient for the generalized functions in $V_h$ of \eqref{V-h} by
   $\nabla_w v_h\in [P_{k+1}(T)]^d$, satisfying
 \an{\label{w-g} (\nabla_w v_h, \b q)_{T} =
                  -(v_0, \nabla\cdot \b q)_T + \sum_{e\in\partial T}
                     \langle  v_b, \b q\cdot \b n \rangle_e
                     \quad \ \forall \b q\in [P_{k+1}(T)]^d, }
where $d=2$ or $3$, is the space dimension, $\b n$ is the outer unit vector on $e\subset\partial T$. 
By an integration by parts, we rewrite the definition \eqref{w-g} as 
 \an{\label{w-g2} (\nabla_w v_h-\nabla v_0, \b q)_{T} = 
          \langle   v_b -v_0 , \b q\cdot \b n \rangle_{\partial T}
                     \quad \ \forall \b q\in  [P_{k+1}(T)]^d. }

 We solve a model problem of Poisson's equation,
 \an{\label{pde} -\Delta u = f \quad \t{ in } \ \Omega; \quad u=0\quad \t{ on } \ \partial\Omega. }
The finite element approximation problem reads:  Find $u_h\in V_h$ such that
\an{\label{finite} (\nabla_w u_h, \nabla_w v_h) = ( f, v_h) \quad \forall v_h\in V_h, }
where $V_h$ is defined in \eqref{V-h} and $\nabla_w$ is defined in \eqref{w-g}.

\begin{lemma} For all $v_h=\{v_0,v_b\} \in V_h$ of \eqref{V-h}, it holds that
\an{ \label{0-b} \|\nabla v_0\|_0 \le C \|\nabla_w v_h\|_0,  }
where $C>0$ is independent of the shape regularity and the size of $\mathcal T_h$,
   $\|\cdot\|_0$ is the $L^2$ norm, and $\nabla_w$ is defined in \eqref{w-g}.
\end{lemma}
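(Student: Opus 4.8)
The plan is to reduce the global estimate to a purely local, mesh-independent one, element by element. Fix $T\in\mathcal T_h$ and introduce the space of tangential polynomial fields
\[
 W_0(T)=\bigl\{\,\mathbf q\in[P_{k+1}(T)]^d:\ \mathbf q\cdot\mathbf n=0\ \text{on }\partial T\,\bigr\}.
\]
For $\mathbf q\in W_0(T)$ the boundary term in \eqref{w-g2} vanishes, hence $(\nabla_w v_h,\mathbf q)_T=(\nabla v_0,\mathbf q)_T$ and, by Cauchy--Schwarz,
\[
 \sup_{0\ne\mathbf q\in W_0(T)}\frac{(\nabla v_0,\mathbf q)_T}{\|\mathbf q\|_{0,T}}\le\|\nabla_w v_h\|_{0,T}.
\]
Summing over $T$, the lemma follows once one proves the mesh-free bound: there is $c_0=c_0(k,d)>0$, independent of the shape and the size of $T$, with
\[
 \|\nabla v_0\|_{0,T}\;\le\;c_0^{-1}\sup_{0\ne\mathbf q\in W_0(T)}\frac{(\nabla v_0,\mathbf q)_T}{\|\mathbf q\|_{0,T}}\qquad\text{for all }v_0\in P_k(T).
\]

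Since $(\nabla v_0,\mathbf q)_T=-(v_0,\nabla\cdot\mathbf q)_T$ for $\mathbf q\in W_0(T)$ (integrate by parts; the boundary term again vanishes), this local bound is equivalent to a shape-uniform inf--sup condition for the pairing of $v_0\in P_k(T)/\mathbb{R}$ with $\nabla\cdot W_0(T)$; in effect $\nabla\cdot$ must carry $W_0(T)$ onto the mean-zero part of $P_k(T)$ with a stability constant that does not see the aspect ratio of $T$. I would prove it in one of two ways. (i) Construct, for a given $v_0$, an explicit competitor $\mathbf q\in W_0(T)$ --- obtained by multiplying $\nabla v_0$ by a suitable product of barycentric coordinates (a face-bubble type factor) so that $\mathbf q$ is tangential on every face --- and check $(\nabla v_0,\mathbf q)_T\ge c_0\,\|\nabla v_0\|_{0,T}\,\|\mathbf q\|_{0,T}$ by a direct computation in which the (possibly extreme) anisotropy of $T$ enters symmetrically in numerator and denominator, exactly as in the model calculation of interpolating a long thin triangle. (ii) Use the gradient-preserving Zhang--Zhang transformation to pull $T$ back to a fixed reference configuration, tracking that $\nabla v_0$ transforms covariantly while $W_0(T)$ transforms by the Piola map, and close by a compactness argument on the normalized family.

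The crux --- and the step I expect to fight with --- is precisely the shape-independence of the last displayed bound. A naive affine pull-back to a single reference simplex does not suffice, because $\nabla v_0$ and the tangential fields in $W_0(T)$ transform under mutually dual rules, so the constant would inherit the anisotropy of $T$; this is exactly where it is essential that $v_b$ and $\nabla_w v_h$ have degree $k+1$, one higher than $v_0$. Indeed, with $v_b$ and $\nabla_w v_h$ of degree $k$ the space $W_0(T)$ is far too small --- it is even trivial in the lowest-order case --- so the supremum above no longer controls $\|\nabla v_0\|_{0,T}$ and the inequality genuinely fails, which is why the standard weak Galerkin method (like the conforming, nonconforming, and DG methods) requires the maximum angle condition. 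The heart of the proof is therefore to quantify, in a scale-free way, that the single extra degree makes $W_0(T)$ rich enough along the long directions of $T$.
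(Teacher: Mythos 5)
Your reduction throws away exactly the information that makes the lemma true: by testing \eqref{w-g2} only with $\mathbf q\in W_0(T)$ you eliminate $v_b$ entirely, and the whole argument then hinges on the local bound
$\|\nabla v_0\|_{0,T}\le c_0^{-1}\sup_{0\neq\mathbf q\in W_0(T)}(\nabla v_0,\mathbf q)_T/\|\mathbf q\|_{0,T}$
with $c_0$ independent of the shape of $T$. That bound is false, and it fails precisely on the degenerate elements the lemma is designed for. Take $d=2$, $k=1$, let $T$ be the sliver with vertices $(0,0)$, $(1,0)$, $(1/2,\epsilon)$, and $v_0=y$, so $\nabla v_0=(0,1)$. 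In this case $W_0(T)$ is exactly the three-dimensional span of the tangential edge bubbles $\lambda_j\lambda_k\mathbf t_i$ (a $P_1$ or $P_2$ field with zero normal trace must vanish at the vertices), and the three edge vectors are $\mathbf t_1=(-1/2,\epsilon)$, $\mathbf t_2=(-1/2,-\epsilon)$, $\mathbf t_3=(1,0)$. For $\mathbf q=a_1\lambda_2\lambda_3\mathbf t_1+a_2\lambda_3\lambda_1\mathbf t_2+a_3\lambda_1\lambda_2\mathbf t_3$ one computes $(\nabla v_0,\mathbf q)_T=\epsilon\,(a_1-a_2)\,|T|/12$, while already the horizontal component of $\mathbf q$ gives $\|\mathbf q\|_{0,T}\ge c\,|T|^{1/2}(a_1^2+a_2^2)^{1/2}$ with an absolute constant $c$, because the normalized Gram matrix of $\lambda_2\lambda_3,\lambda_3\lambda_1,\lambda_1\lambda_2$ is a fixed positive definite matrix independent of the shape of $T$. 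Hence $\sup_{\mathbf q\in W_0(T)}(\nabla v_0,\mathbf q)_T/\|\mathbf q\|_{0,T}\le C\epsilon|T|^{1/2}$ while $\|\nabla v_0\|_{0,T}=|T|^{1/2}$, so no shape-free $c_0$ exists; neither your bubble competitor (i) nor a compactness argument (ii) can rescue a false inequality. (Under shape regularity the bound does hold --- it is the usual inf--sup for $\nabla\cdot$ on normal-trace-free $[P_{k+1}]^d$ against $P_k/\mathbb R$ --- but its constant degenerates with the aspect ratio, which defeats the entire purpose of the lemma.)

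The paper's proof keeps $v_b$ in play instead of annihilating it: it splits $v_h=\{v_0,v_0\}+\{0,v_b-v_0\}=v_1+v_2$, uses $\nabla_w\{v_0,v_0\}=\nabla v_0$, and bounds the cross term by the cosine $\gamma<1$ of the angle between the two finite-dimensional spaces of weak gradients, giving $\|\nabla_w v_h\|_{0,T}^2\ge(1-\gamma)\|\nabla v_0\|_{0,T}^2$; the shape-independence of $\gamma$ is then arranged through the gradient-preserving (Zhang--Zhang) substitution $\hat{\mathbf x}\mapsto M\hat{\mathbf x}$ with $M=J^{-T}|J|^{1/2}$, which converts the cross term on an arbitrary $T$ into the same reference-configuration pairing, rather than through any test-function restriction. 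If you want to keep a duality flavor, you must allow $\mathbf q$ with nonzero normal trace and use the degree-$(k+1)$ freedom of $v_b$ on each face to absorb the resulting boundary term --- which is, in effect, what the decomposition-plus-angle argument accomplishes.
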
  

\begin{proof}  On the unit reference triangle or tetrahedron $\hat T$,  we decompose 
   $v_h=\{v_0,v_b\}=\{v_0,v_0\} + \{0,v_b-v_0\}=v_1+v_2 \in V_1(\hat T) + V_2(\hat T)$.
   As $v_1$ is a one piece polynomial, $\hat \nabla_w v_1=\hat \nabla v_0$.
   Then, we have
\an{\label{t-0} \ad{
  \|\hat \nabla_w v_h\|^2_{\hat T}
     & =\|\hat \nabla v_0\|^2_{\hat T} +2 (\hat \nabla v_0,\hat \nabla_w v_2)_{\hat T}
         +\|\hat \nabla_w v_2\|_{\hat T}^2\\
     & \ge \|\hat \nabla v_0\|^2_{\hat T} -2\gamma \|\hat \nabla v_0\|_{\hat T}\|\hat \nabla_w v_2\|_{\hat T}
         +\|\hat \nabla_w v_2\|_{\hat T}^2\\
     & \ge (1-\gamma)\|\hat \nabla v_0\|^2_{\hat T}, }
} where $\gamma=\cos \theta < 1$ and $\theta$ is the angle between the two finite dimensional vector spaces
  under the inner product.  We map next $\hat T$ to a general triangle/tetrahedron by an affine (linear plus
  a shift) mapping $F$.  Let $J(F)$ be the Jacobi matrix of $F$.
  Then $M=J(T)^{-T}|J|^{1/2}$ is a constant $d\times d$  matrix.
  The mapped functions for $v_0$ and $v_2$ are $\hat v_0(\hat {\b x})=v_0(F(\hat {\b x}))$
     and $\hat v_2(\hat {\b x})=v_2(F(\hat {\b x}))$, respectively.
We define two polynomials on $\hat T$ by 
\an{\label{Zhang}  u_0( \hat{\b x})=\hat v_0(M\hat{\b x})\in V_1(\hat T)\quad \ \t{and} \ \quad
      u_2( \hat{\b x}) =\hat v_2(M\hat{\b x}) \in V_2(\hat T).}
It follows that
\a{ \hat \nabla  u_0 (\hat{\b x}) = M \hat \nabla v_0(\hat{\b x})\quad \ \t{and} \ \quad
     \hat \nabla  u_2 (\hat{\b x}) = M \hat \nabla v_2( \hat{\b x}).}      
We note that the gradient-preserving transformation \eqref{Zhang} is called a Zhang-Zhang
  transformation, which is like a scalar version of inverse (divergence-preserving)
  Piola transformation.
 By choosing two different functions and \eqref{t-0}, we have
\an{\label{t-g} \ad{
  \|\nabla_w v_h\|^2_{T}
     & =\| \nabla v_0\|^2_{T} +2 (M \hat \nabla \hat v_0, M \hat \nabla_w \hat v_2)_{\hat T}
         +\|\nabla_w v_2\|_{T}^2\\
     & =\| \nabla  v_0\|^2_{T} +2 ( \hat \nabla  u_0,  \hat \nabla_w   u_2)_{\hat T}
         +\|\nabla_w v_2\|_{T}^2\\
     & \ge \| \nabla  v_0\|^2_{T} -2\gamma \|\hat \nabla  u_0\|_{\hat T}\| 
             \hat \nabla_w   u_2\|_{\hat T}
         +\|\nabla_w v_2\|_{T}^2\\
     & = \| \nabla v_0\|^2_{T} -2\gamma \|M \hat \nabla \hat v_0\|_{\hat T}\| 
             M \hat \nabla_w \hat v_2\|_{\hat T}
         +\|\nabla_w v_2\|_{T}^2\\
           & \ge (1-\gamma)\| \nabla v_0\|^2_{ T}. }
} 
 Summing up \eqref{t-g} over all $T$, the lemma is proved.

\end{proof}
   
\begin{lemma}The linear system of finite element equations \eqref{finite}
   has a unique solution.
\end{lemma}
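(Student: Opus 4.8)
The plan is to use that \eqref{finite} is a square linear system on the finite-dimensional space $V_h$, so existence and uniqueness are equivalent and it suffices to show that the homogeneous problem has only the trivial solution. Concretely, I would set $f\equiv 0$ in \eqref{pde}, let $u_h=\{u_0,u_b\}\in V_h$ be any solution of \eqref{finite}, and test with $v_h=u_h$. This immediately gives $\|\nabla_w u_h\|_0^2=(\nabla_w u_h,\nabla_w u_h)=0$, hence $\nabla_w u_h=0$ on every $T\in\mathcal T_h$.

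Next I would invoke Lemma~\ref{0-b} (inequality \eqref{0-b}): since $\|\nabla u_0\|_0\le C\|\nabla_w u_h\|_0=0$, the interior component $u_0$ is constant on each element, say $u_0|_T=c_T$. Substituting $\nabla_w u_h=0$ and $\nabla u_0=0$ into the integration-by-parts identity \eqref{w-g2} yields $\langle u_b-u_0,\b q\cdot\b n\rangle_{\partial T}=0$ for every $\b q\in[P_{k+1}(T)]^d$ and every $T$. Because the face normal traces of $[P_{k+1}(T)]^d$ on a simplex can be prescribed independently in $P_{k+1}(e)$ on each face $e\subset\partial T$ (this is the unisolvency of the Brezzi--Douglas--Marini element $[P_{k+1}(T)]^d$ with its face degrees of freedom), one may pick $\b q$ so that $\b q\cdot\b n$ equals $u_b-u_0$ on a chosen face and vanishes on the other faces; testing against such $\b q$ forces $u_b=u_0$ on $\partial T$ for every $T$.

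Finally I would pass to a global conclusion. Since $u_b$ is single-valued on each interior face $e$ shared by two elements $T$ and $T'$, the relations $u_b|_e=c_T$ and $u_b|_e=c_{T'}$ give $c_T=c_{T'}$; by connectedness of $\Omega$, $u_0$ is a single global constant $c$, and because $u_b|_{\partial\Omega}=0$ while $u_b=u_0=c$ on boundary faces, we get $c=0$. Hence $u_0\equiv 0$ and $u_b\equiv 0$, i.e. $u_h=0$, which proves uniqueness and, the system being square, existence as well.

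The only nonroutine step is the passage from $\langle u_b-u_0,\b q\cdot\b n\rangle_{\partial T}=0$ to $u_b=u_0$ on $\partial T$, which rests on the surjectivity of the face-normal-trace map from $[P_{k+1}(T)]^d$ onto $\prod_{e\subset\partial T}P_{k+1}(e)$. I would either cite the classical BDM unisolvency or, for self-containedness, construct the required $\b q$ on the reference simplex and transform it back; as $\b q\cdot\b n$ is not a weak gradient, this transformation is just the ordinary change of variables and presents no difficulty. Everything else is the standard square-system plus coercivity argument, now available thanks to Lemma~\ref{0-b}.
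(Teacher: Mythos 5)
Your proposal is correct and follows essentially the same route as the paper: square system reduces to uniqueness, testing with $u_h$ gives $\nabla_w u_h=0$, Lemma 2.1 gives $u_0$ piecewise constant, and a test field $\b q$ with prescribed normal trace on one face forces $u_b=u_0$ on $\partial T$, after which single-valuedness of $u_b$ and the boundary condition yield $u_h=0$. The only difference is that you explicitly justify the face-normal-trace surjectivity via BDM unisolvency, a step the paper simply asserts by exhibiting such a $\b q$.
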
  

\begin{proof} For a square system of finite linear equations,  we only need to prove the uniqueness.
  Let $ f=  0$ in \eqref{finite}.
Letting $ v_h=  u_h$ in \eqref{finite}, 
      we  get $\nabla_w u_h=0$.  By \eqref{0-b},  $\nabla u_0=0$.
Thus $u_0$ is a piece-wise constant function on the mesh.
Letting $\b q\in [P_{k+1}(T)]^d$ such that $\b q\cdot \b n=u_b-u_0$ on one edge/triangle and 
             $\b q\cdot \b n= 0$ on the rest face edges/triangles of $T$,
       by the definition of $\nabla_w$ in \eqref{w-g2}, we get 
\a{  0 &= (\nabla_w u_h, \b q)_T  =  (\nabla u_0,  \b q)
     +  \langle  u_b-u_0, \b q \cdot \b n \rangle_{\partial T} \\
  &= 0 +  \| u_b -u_0\|_{0,e}^2.  }  
     On the two sides of $e$, the two $u_0|_{e}= u_b$.
     Therefore $u_h$ is continuous on the
       whole domain.
 We conclude that $u_h$ is one global constant and it is zero by the boundary condition \eqref{V-h}.
The lemma is proved. 
\end{proof}
         
         \section{Convergence}
         
\begin{theorem}
Let $u\in (H^1_0(\Omega) \cap H^{k+3}(\Omega))$
    be the solution of the Poisson equation \eqref{pde}. 
Let $u_h\in V_h $  be the solution of the finite element problem   \eqref{finite}.
It holds that   
\an{ \label{h1} 
   h \| \Pi_{k+1}\nabla u - \nabla_w {u}_{h} \|_{0} + \|Q_h u - {u}_{h}\|_{0} & \le  C h^{k+3} |u|_{k+3}, } 
where $\Pi_{k+1}$ is the $L^2$ projection to the piecewise $P_{k+1}(T)$ space on mesh $\mathcal T_h$,
  and $Q_h w=\{w_0, w_b\}=\{\Pi_k w, \Pi_{k+1}^e w\}$ is the local $L^2$ projection on each element
    and each face edge/triangle.
\end{theorem}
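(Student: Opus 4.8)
The plan is to compare $u_h$ not with $u$ but with the local projection $Q_h u$, and to reduce the whole error analysis to the control of a single inter-element residual functional, with a constant independent of the shape of the mesh.

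\textit{Reduction to a residual.} First I would record the commuting property: for $w\in H^2(\Omega)$ with $w|_{\partial\Omega}=0$ one has $\nabla_w Q_h w=\Pi_{k+1}\nabla w$ on every $T$. This is immediate from \eqref{w-g}: for $\b q\in[P_{k+1}(T)]^d$ the polynomial $\nabla\cdot\b q$ has degree $\le k$ and $\b q\cdot\b n|_e\in P_{k+1}(e)$, so the two projections in $Q_h$ drop out and a single integration by parts gives $(\nabla_w Q_h w,\b q)_T=(\nabla w,\b q)_T$. Writing $\varepsilon_h=\{\varepsilon_0,\varepsilon_b\}=Q_h u-u_h$ and $\zeta_u=\nabla u-\Pi_{k+1}\nabla u$ (elementwise), I would then combine this identity with \eqref{w-g2} (taking $\b q=\Pi_{k+1}\nabla u$), with $-\Delta u=f$, and with the continuity of $\nabla u$ across interior faces together with the single-valuedness of $v_b$ and $v_b|_{\partial\Omega}=0$, to arrive at the error equation
\[ (\nabla_w\varepsilon_h,\nabla_w v_h)=\sum_{T}\langle v_0-v_b,\ \zeta_u\cdot\b n\rangle_{\partial T}=:\mathcal R_u(v_h)\qquad\forall\,v_h\in V_h . \]
The same computation applied to a smooth $w$ vanishing on $\partial\Omega$ gives $(\nabla_w Q_h w,\nabla_w v_h)=(v_0,-\Delta w)+\mathcal R_w(v_h)$, with $\mathcal R_w(v_h)=\sum_T\langle v_0-v_b,(\nabla w-\Pi_{k+1}\nabla w)\cdot\b n\rangle_{\partial T}$; this will be needed for the duality step.

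\textit{The key estimate (and main obstacle).} Everything now rests on bounding $|\mathcal R_w(v_h)|$ by $C\,\Theta_w\,\|\nabla_w v_h\|_0$ with $C$ independent of the shape of $\mathcal T_h$ and with $\Theta_u\le Ch^{k+2}|u|_{k+3}$, $\Theta_\phi\le Ch|\phi|_2$. The classical route — a trace inequality on $\partial T$ — is forbidden, since its constant degenerates on elements that violate the maximum angle condition; this is exactly the place where the argument must be new. My plan is to convert the boundary pairing into an interior one. On each $T$ I choose a vector polynomial $\b Q_T\in[P_{k+1}(T)]^d$ whose normal moments against $P_{k+1}(e)$ on every face $e\subset\partial T$ coincide with those of $(\nabla w-\Pi_{k+1}\nabla w)\cdot\b n$; such a $\b Q_T$ exists because the normal-trace map $[P_{k+1}(\hat T)]^d\to\prod_e P_{k+1}(e)$ is onto, the very fact that makes the angle $\gamma$ in the proof of \eqref{0-b} strictly less than one. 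Since $v_0-v_b$ restricted to each face lies in $P_{k+1}(e)$, \eqref{w-g2} yields $\langle v_0-v_b,(\nabla w-\Pi_{k+1}\nabla w)\cdot\b n\rangle_{\partial T}=\langle v_0-v_b,\b Q_T\cdot\b n\rangle_{\partial T}=(\nabla v_0-\nabla_w v_h,\b Q_T)_T$, whence, after summation, Cauchy--Schwarz and \eqref{0-b},
\[ |\mathcal R_w(v_h)|\ \le\ C\,\|\nabla_w v_h\|_0\ \Bigl(\textstyle\sum_T\|\b Q_T\|_{T}^2\Bigr)^{1/2}. \]
It then remains to show $\|\b Q_T\|_T\le C h_T^{\min(k+1,\,m)+1}|w|_{m+1,T}$ (with $m=k+2$ for $w=u$, $m=1$ for $w=\phi$), \emph{with a constant free of the shape of $T$}. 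Here I would transport $\b Q_T$ to $\hat T$ by the gradient-preserving Zhang--Zhang transformation combined with the Piola map, so that the lift becomes a fixed finite-dimensional operation on $\hat T$ while the $L^2(T)$ norm is reproduced exactly as an $L^2(\hat T)$ norm, with no spurious shape factor; and I would use that $w\mapsto\b Q_T$ annihilates $P_{m+1}(T)$ (the data is a projection error and vanishes once $\nabla w$ is a $P_{k+1}$ field), so that a Bramble--Hilbert/Deny--Lions argument — whose constants for $L^2$-type projections depend only on the diameter, not the shape — delivers the claimed order. I expect this to be the technically hardest part of the whole proof.

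\textit{Energy estimate and duality.} Taking $v_h=\varepsilon_h$ in the error equation gives $\|\nabla_w\varepsilon_h\|_0^2=\mathcal R_u(\varepsilon_h)\le Ch^{k+2}|u|_{k+3}\,\|\nabla_w\varepsilon_h\|_0$, hence $\|\nabla_w\varepsilon_h\|_0\le Ch^{k+2}|u|_{k+3}$; since $\nabla_w\varepsilon_h=\Pi_{k+1}\nabla u-\nabla_w u_h$ this is the first term of \eqref{h1}. For the $L^2$ bound I would use duality: let $\phi\in H^1_0(\Omega)$ solve $-\Delta\phi=\varepsilon_0$ with $\|\phi\|_2\le C\|\varepsilon_0\|_0$. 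Using the identity for $Q_h\phi$ from the first step, the symmetry of $(\nabla_w\cdot,\nabla_w\cdot)$, and the error equation for $u$,
\[ \|\varepsilon_0\|_0^2=(\varepsilon_0,-\Delta\phi)=(\nabla_w Q_h\phi,\nabla_w\varepsilon_h)-\mathcal R_\phi(\varepsilon_h)=\mathcal R_u(Q_h\phi)-\mathcal R_\phi(\varepsilon_h). \]
The term $\mathcal R_\phi(\varepsilon_h)$ is bounded by $Ch|\phi|_2\,\|\nabla_w\varepsilon_h\|_0\le Ch^{k+3}|u|_{k+3}\|\varepsilon_0\|_0$ by the key estimate and the energy bound. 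For $\mathcal R_u(Q_h\phi)$ one must not invoke the key estimate directly — that costs a power of $h$ because $\nabla\Pi_k\phi-\Pi_{k+1}\nabla\phi$ is only $O(1)$ in $L^2$. Instead I would integrate by parts once more inside each $T$ and use $\langle\Pi^e_{k+1}\phi-\phi,q\rangle_e=0$ for $q\in P_{k+1}(e)$ together with $[\nabla u\cdot\b n]_e=0$, which collapses $\mathcal R_u(Q_h\phi)$ to
\[ \mathcal R_u(Q_h\phi)=-\!\int_\Omega(\nabla\phi-\Pi_{k+1}\nabla\phi)\cdot(\nabla u-\Pi_{k+1}\nabla u)\ -\ \sum_T\int_T(\phi-\Pi_k\phi)(\Delta u-\Pi_k\Delta u), \]
both pieces being $O(h^{k+3}|u|_{k+3}\|\varepsilon_0\|_0)$ by Cauchy--Schwarz and the shape-independent $L^2$-approximation estimates. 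Combining, $\|Q_h u-u_h\|_0=\|\varepsilon_0\|_0\le Ch^{k+3}|u|_{k+3}$, which with the energy bound is \eqref{h1}. The two places where care is genuinely needed are the shape-uniform lifting bound of the previous paragraph and this last collapse, which is what allows the duality argument to recover the full extra power of $h$ rather than only $h^{k+2}$.
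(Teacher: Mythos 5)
Your overall architecture coincides with the paper's: the commuting identity $\nabla_w Q_h w=\Pi_{k+1}\nabla w$, the error equation $(\nabla_w(Q_hu-u_h),\nabla_w v_h)=\mathcal R_u(v_h)$ with the boundary residual, the conversion of each boundary pairing into an interior pairing through a field in $[P_{k+1}(T)]^d$ with matching normal moments together with \eqref{w-g2}, then Cauchy--Schwarz, \eqref{0-b}, and duality. Your collapse of $\mathcal R_u(Q_h\phi)$ in the duality step is correct and is in fact a slightly cleaner variant of the paper's: the paper inserts $I_{\t{BDM}}\nabla u$ and integrates by parts, while you use the face-orthogonality of $\Pi^e_{k+1}$, the continuity of $\nabla u\cdot\b n$, and one interior integration by parts, ending with the two interior terms $(\nabla\phi-\Pi_{k+1}\nabla\phi,\nabla u-\Pi_{k+1}\nabla u)$ and $(\phi-\Pi_k\phi,\Delta u-\Pi_k\Delta u)$, which indeed give the extra power of $h$ using only element-wise $L^2$-projection estimates.

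The genuine gap is exactly where you say the difficulty lies: the shape-uniform bound $\|\b Q_T\|_{0,T}\le Ch_T^{k+2}|u|_{k+3,T}$ (and $\le Ch_T|\phi|_{2,T}$) for your abstract normal-moment-matching lift. Existence of such a $\b Q_T$ is trivial; the entire content of the theorem is that this bound holds with a constant independent of the element shape, and the mechanism you sketch does not deliver it. The Piola map does not reproduce the $L^2$ norm (its effect on $\|\cdot\|_{0,T}$ is governed by the singular values of the Jacobian, which degenerate precisely on the slivers and needles at issue); the Zhang--Zhang transformation of the paper is a self-map of the reference element used only to prove \eqref{0-b}, not a device for transporting face data; and a Bramble--Hilbert argument with ``diameter-only'' constants applies to interior $L^2$-type projections, not to an operator whose data are face moments, since traces on faces of a degenerate simplex are not controlled by interior norms with shape-free constants. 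Note also that your bound of $\mathcal R_\phi(\varepsilon_h)$ in the duality step relies on the same unproved estimate, so both halves of \eqref{h1} are affected. The paper closes precisely this hole by not taking a minimum-norm lift at all: in \eqref{p2} it chooses the concrete field $\b q=\Pi_{k+1}\nabla u-I_{\t{BDM}}\nabla u$, which has the required normal moments by the BDM face-moment conditions, and whose $L^2(T)$ norm is bounded directly on the physical element by the two interior approximation errors $\|\Pi_{k+1}\nabla u-\nabla u\|_{0,T}$ and $\|\nabla u-I_{\t{BDM}}\nabla u\|_{0,T}$, estimated via local (averaged) Taylor polynomials; the same device is used with $w$ in the dual estimate. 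To complete your proof you must either adopt such an explicit choice and justify its shape-uniform approximation bound, or supply an actual shape-uniform construction of the lift; as written, the key estimate is asserted rather than proved.
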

\begin{proof} 
Testing the Poisson equation \eqref{pde} by a function $v_h=\{v_0,v_b\}\in V_h$, it follows by doing
 an integration by parts  and \eqref{w-g2} with $\b q=\Pi_{k+1}\nabla u$ that
\an{\label{p1} \ad{
    (f,v_h)  
    &= ( \nabla u, \nabla v_0) - \sum_{T\in \mathcal T_h} \langle
            \nabla u\cdot \b n, v_0 \rangle_{\partial T} \\ 
    &= (\Pi_{k+1} \nabla u, \nabla v_0) - \sum_{T\in \mathcal T_h} \langle
            \nabla u\cdot \b n, v_0 - v_b\rangle_{\partial T} \\
    &= ( \Pi_{k+1}  \nabla u, \nabla_w v_0) +\sum_{T\in \mathcal T_h}   \langle
            ( \Pi_{k+1}  \nabla u-\nabla u)\cdot \b n, v_0- v_b \rangle_{\partial T}. } }
By the definition of weak gradient \eqref{w-g} with $\b q=\Pi_{k+1}\nabla u-\nabla_w Q_h u$,
\an{\label{p-c} \ad{ \|\b q\|_0^2 &= (\Pi_{k+1}\nabla u-\nabla_w Q_h u, \b q) \\
       &=(\nabla u,\b q)+(\Pi_k u, \nabla\cdot \b q) - \sum_{T\in\mathcal T_h}
          \langle \Pi_{k+1}^e u, \b q\cdot\b n\rangle_{\partial T} \\
       &=(\Pi_k u-u, \nabla\cdot \b q) + \sum_{T\in\mathcal T_h}
          \langle u- \Pi_{k+1}^e u, \b q\cdot\b n\rangle_{\partial T} \\
       &= 0 + \sum_{T\in\mathcal T_h} 0 = 0.  } } 
Subtracting \eqref{p1} from \eqref{finite},  we get by \eqref{p-c}, \eqref{0-b} and 
   letting $\b q=\Pi_{k+1} \nabla  u - I_{\t{BDM}} \nabla  u\in [P_{k+1}]^d$ in  \eqref{w-g2} that
\an{\label{p2} \ad{ & \quad \  (\Pi_{k+1} \nabla u - \nabla_w {u}_{h}, \nabla_w v_h) 
              = (\nabla_w (Q_h u- u_h), \nabla_w v_h) \\
         &= -\sum_{T\in \mathcal T_h}   \langle
            ( \Pi_{k+1} \nabla u-I_{\t{BDM}} \nabla u)\cdot \b n, v_b-v_0 \rangle_{\partial T} 
     \\    &=\sum_{T\in \mathcal T_h}  
            (\Pi_{k+1} \nabla u-I_{\t{BDM}} \nabla u), \nabla v_0 -  \nabla_w v_h)_{T} 
      \\   &\le \bigg(\sum_{T\in \mathcal T_h}  
            2 \|\Pi_{k+1} \nabla u-\nabla u\|_0^2 
             + 2\|\nabla u-I_{\t{BDM}} \nabla u\|_0^2\bigg)^{1/2} \\
          & \quad \ \cdot \bigg(\sum_{T\in \mathcal T_h}  
            2 \| \nabla v_0\|_0^2 + 2 \|\nabla_w v_h \|_0^2\bigg)^{1/2}
        \\  &\le C h^{k+2} |u|_{k+3} \|\nabla_w v_h\|_0, } }
where $I_{\t{BDM}} : [H^1(T)]^d \to \t{BDM_{k+1}} =\{\b q\in H(\t{div}) : \b q|_T\in [P_{k+1}]^d\}$
   is the BDM interpolation defined by
\a{ (I_{\t{BDM}} \b w - \b w, \nabla q)_T &=0 \quad \forall \b q\in P_{k}(T)\setminus P_{0}(T), \\
    \langle (I_{\t{BDM}} \b w-\b w)\cdot\b n, q\rangle_{e} &=0
      \quad \forall q\in P_{k+1}(e),\\
      ( I_{\t{BDM}} \b w - \b w, \b q)_T&=0 \quad \forall \b q\in\{ \b q\in [P_{k+1}(T)]^d :
         \nabla\cdot \b q=0, \ \b q\cdot\b n|_{\partial T}=0\} . }
As $T$ is a star-shaped region, by local Taylor polynomials \cite{Brenner}, we have 
\a{ \|\Pi_{k+1} \nabla u-\nabla u\|_0\le C h^{k+2} |u|_{k+3}\quad \t{and } \quad
       \|\nabla u-I_{\t{BDM}} \nabla u\|_0 \le C h^{k+2} |u|_{k+3}, }
       which are used in \eqref{p2}.   
By \eqref{p2}, as $Q_h u\in V_h$, we can cancel the weak gradient from both sides to get
\an{\label{h1e}   \| \Pi_{k+1}\nabla u - \nabla_w {u}_{h} \|_{0}  & \le  C h^{k+2} |u|_{k+3}. } 

For the $L^2$ error estimate, we assume  the standard full-regular for the problem of finding
  $w\in H^1_0(\Omega)$ satisfying
\an{\label{w} (\nabla w, \nabla v) = (Q_h u-u_h, v) \quad \forall v \in H^1_0(\Omega), }
that 
\an{\label{reg}  |w |_2 \le C \|Q_h u-u_h\|_0.  }
Let $w_h\in V_h$ be the finite element solution for \eqref{w}.
Denoting $e_h=Q_h u- u_h=\{e_0,e_b\}$, by \eqref{p1} and  \eqref{p2} (up to $k=-1$ order for the
   third term below), we have
\a{ \|e_h\|_0^2&=(\nabla  w, \nabla e_0) -\sum_{T\in\mathcal T_h} 
     \langle \nabla w\cdot \b n, e_0-e_b\rangle_{\partial T}\\
     &=(\nabla_w Q_h w, \nabla_w e_h) +\sum_{T\in\mathcal T_h} 
     \langle (\Pi_{k+1} \nabla w- \nabla w)\cdot \b n, e_0-e_b\rangle_{\partial T}\\
    &= \sum_{T\in\mathcal T_h} ( \langle (\Pi_{k+1} \nabla u- \nabla u)\cdot \b n, 
       \Pi_k w- \Pi_{k+1}^e w \rangle_{\partial T} \\
    &\qquad\quad  + \langle (\Pi_{k+1} \nabla w- \nabla w)\cdot \b n, e_0-e_b\rangle_{\partial T})  \\
    & =\sum_{T\in \mathcal T_h}  ( 
            \nabla\cdot (\Pi_{k+1} \nabla u-I_{\t{BDM}} \nabla u), w -  \Pi_k w )_{T} \\
    &\qquad\quad  +  (\Pi_{k+1} \nabla u-I_{\t{BDM}} \nabla u), \nabla( w -  \Pi_k w) )_{T}  \\
    &\qquad\quad  + 
            (\Pi_{k+1} \nabla w-I_{\t{BDM}} \nabla w), \nabla e_0 -  \nabla_w e_h)_{T}) \\
    &\le C(h^{k+1} |u|_{k+3}h^2|w|_2 + h^{k+2} |u|_{k+3}h |w|_2 + h|w|_2 \|\nabla_w (Q_h u-u_h)\|_0) \\
    &\le Ch^{k+3} |u|_{k+3} \|e_h\|_0,
}where \eqref{h1e} and \eqref{reg} are applied in the end.
The proof is complete.  
                
\end{proof}

\section{Numerical test} 

We solve a 2D and a 3D Poisson's equation.
In the first example, we solve \eqref{pde} on the unit square domain $\Omega=(0,1)\times (0,1)$, 
  where 
\a{ f=2^5( y-y^2+x-x^2).  }
Thus the exact solution of the first example is
\an{\label{s-1} u=2^4(x-x^2)(y-y^2). }
We use two families of meshes shown in Figure \ref{f-g-2}.
The top meshes are quasi-uniform and the bottom meshes are degenerate which 
  violate the maximum angle condition when $h\to 0$.

\begin{figure}[H]
 \begin{center}\setlength\unitlength{1.0pt}
\begin{picture}(280,184)(0,53)
  \put(0,95){\includegraphics[width=95pt]{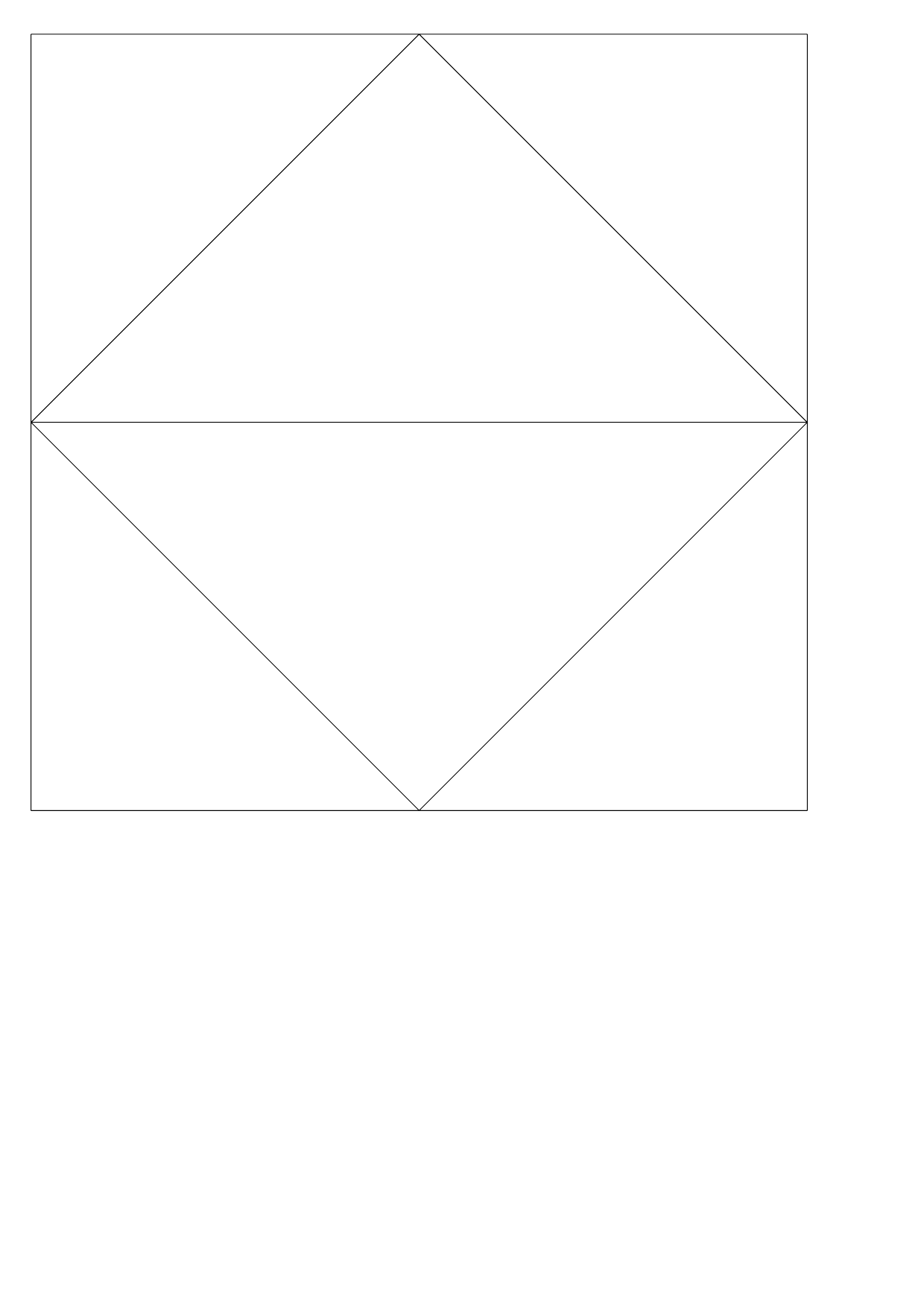}} 
  \put(95,95){\includegraphics[width=95pt]{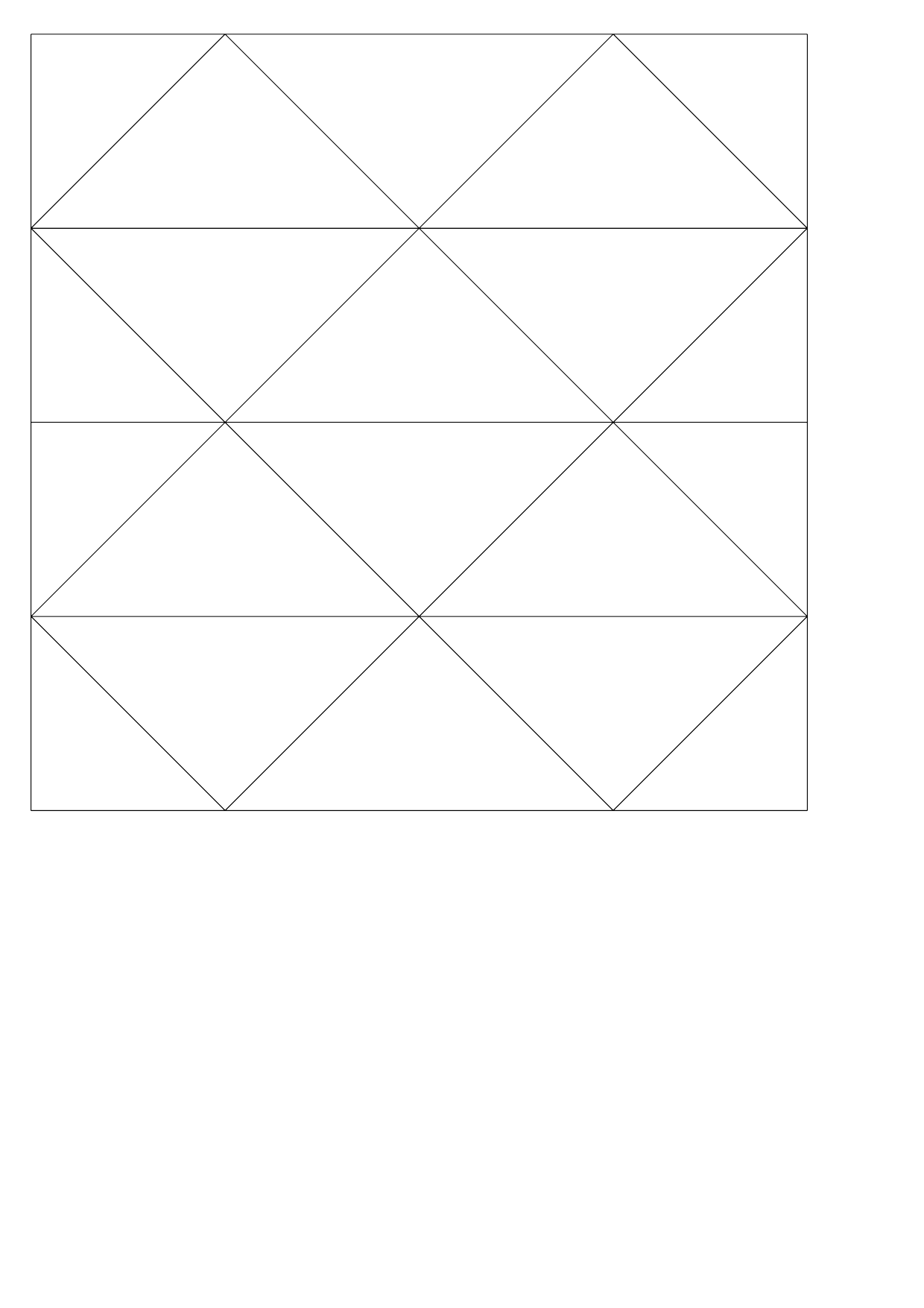}}
  \put(190,95){\includegraphics[width=95pt]{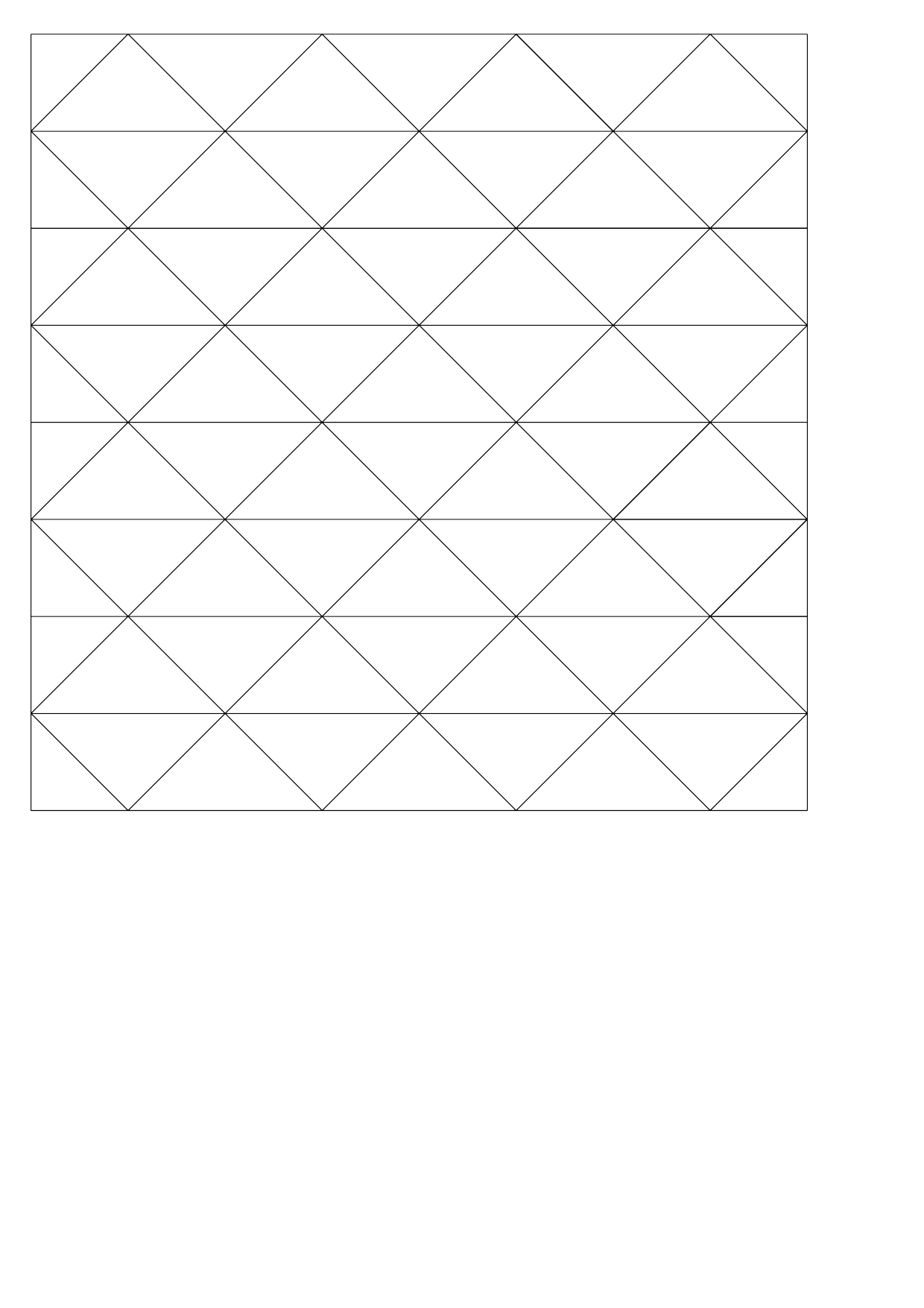}}
  \put(0, 0){\includegraphics[width=95pt]{g21grid.pdf}} 
  \put(95, 0){\includegraphics[width=95pt]{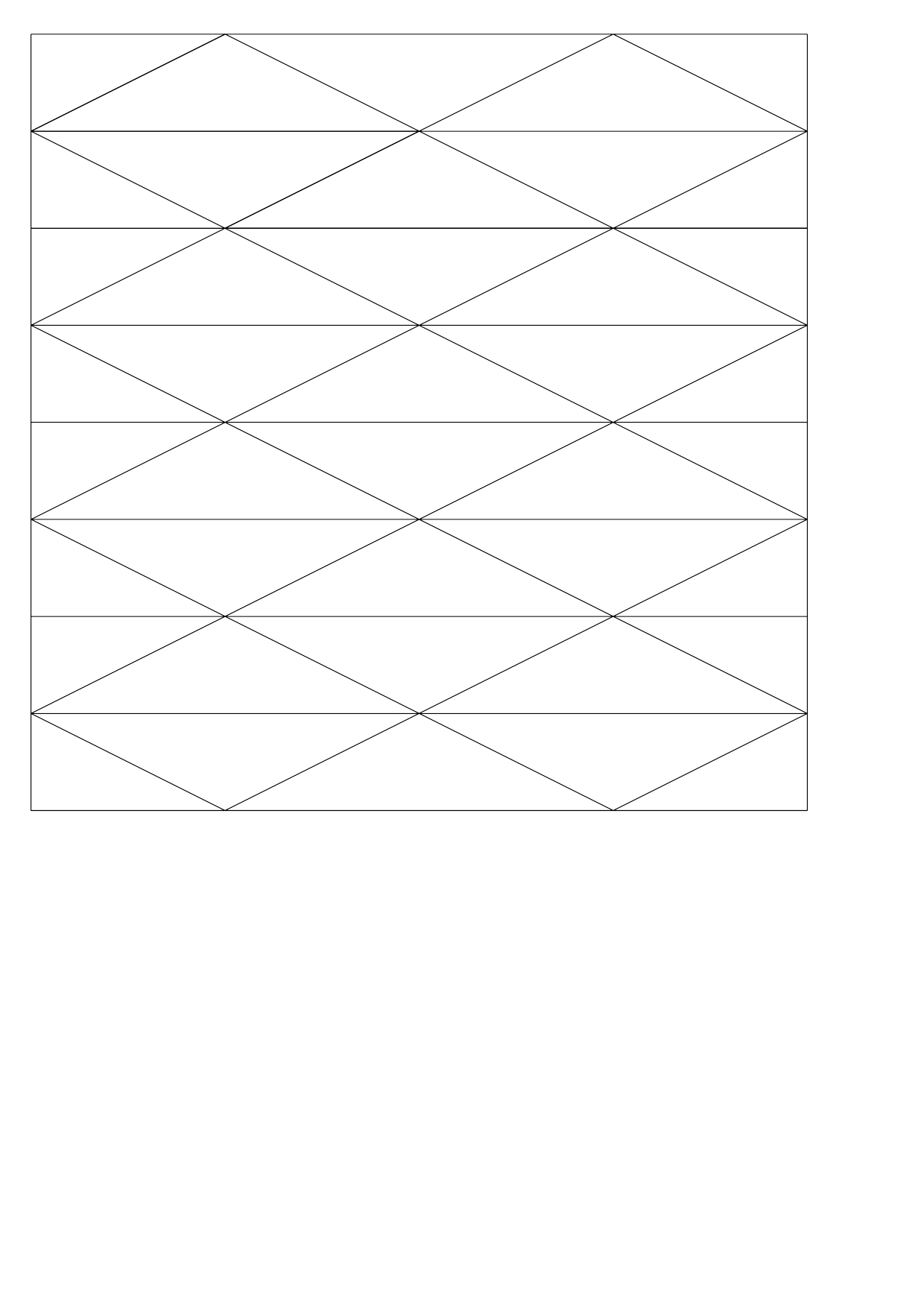}}
  \put(190, 0){\includegraphics[width=95pt]{g22grid3.pdf}}
  
  \put(5,229){\small(A)}\put(5,134){\small(B)}
 \end{picture}\end{center}
\caption{(A) Quasi-uniform but non-nested meshes.
  \quad (B) Degenerate (violates the maximum angle condition) meshes. }
\label{f-g-2}
\end{figure}

In Table \ref{t1}, we list the result for the $P_1$ conforming finite element method in
  solving the Poisson equation \eqref{pde} with the exact solution \eqref{s-1}.
 We can see the method converges at the optimal order in $L^2$ norm, and superconvergent in
   semi-$H^1$ norm, on the quasi-uniform meshes, shown in Figure \ref{f-g-2}.
However, when we refine the mesh more in one direction, it is supposedly that
   we have better solutions.
But the $P_1$ conforming finite element solutions do not converge at all, in the second half
   of Table \ref{t1}.
The solutions converge to a different function, below the true solution \eqref{s-1}.

  \begin{table}[H]
  \caption{ Error profile for the 2D $P_1$ conforming 
      finite element in computing \eqref{s-1}.} \label{t1}
\begin{center}  
   \begin{tabular}{c|rr|rr}  
 \hline 
grid &  $ \|I_h\b u - \b u_h \|_{0}$ & $O(h^r)$ &  $ |I_h\b u - \b u_h |_{1}$ & $O(h^r)$   \\ \hline 
   &\multicolumn{4}{c}{On quasi-uniform meshes Figure \ref{f-g-2}(A). }\\ \hline 
 3& 0.01306&1.6&0.07255&1.5 \\
 4& 0.00356&1.9&0.02418&1.6 \\
 5& 0.00092&1.9&0.00814&1.6 \\
 \hline 
 &\multicolumn{4}{c}{On degenerate meshes Figure \ref{f-g-2}(B).  }\\ \hline  
 3& 0.12234&0.0&0.73861&0.0\\ 
 4& 0.12556&0.0&0.80627&0.0\\ 
 5& 0.12640&0.0&0.84195&0.0\\ 
\hline 
\end{tabular} \end{center}  \end{table}

In Table \ref{t2}, we compute the $P_1$ weak Galerkin finite element ($k=1$ in \eqref{V-h}) solutions for
   the exact solution \eqref{s-2}, on the meshes shown in Figure \ref{f-g-2}.
 We can see the method converges at two orders above the optimal order in both
    $L^2$ norm and the energy norm, on both 
     the quasi-uniform meshes and the degenerate meshes in Figure \ref{f-g-2}.

  \begin{table}[H]
  \caption{ Error profile for the 2D $P_1$ weak Galerkin 
      finite element in computing \eqref{s-1}.} \label{t2}
\begin{center}  
   \begin{tabular}{c|rr|rr}  
 \hline 
grid &  $ \|I_h\b u - \b u_h \|_{0}$ & $O(h^r)$ &  $ |I_h\b u - \b u_h |_{1}$ & $O(h^r)$   \\ \hline 
   &\multicolumn{4}{c}{On quasi-uniform meshes Figure \ref{f-g-2}(A). }\\ \hline 
 3&   0.649E-04 &3.89&   0.245E-02 &2.87 \\
 4&   0.430E-05 &3.91&   0.321E-03 &2.93 \\
 5&   0.279E-06 &3.95&   0.410E-04 &2.97 \\
 \hline 
 &\multicolumn{4}{c}{On degenerate meshes Figure \ref{f-g-2}(B).  }\\ \hline  
 3&   0.549E-04 &3.80&   0.110E-02 &3.40 \\
 4&   0.376E-05 &3.87&   0.117E-03 &3.24 \\
 5&   0.247E-06 &3.93&   0.136E-04 &3.10 \\
\hline 
\end{tabular} \end{center}  \end{table}

In the second example, we solve \eqref{pde} on the unit cube domain $\Omega=(0,1)\times (0,1)\times (0,1)$, 
  where 
\a{ f=2^7((y-y^2+x-x^2)(z-z^2)+(x-x^2)(y-y^2)).  }
Thus the exact solution of the second example is
\an{\label{s-2} u=2^6(x-x^2)(y-y^2)(z-z^2). }
Again we use two families of 3D meshes shown in Figure \ref{f-g-3}.
The top meshes are quasi-uniform and the bottom meshes are degenerate.

\begin{figure}[H]
 \begin{center}\setlength\unitlength{1.0pt}
\begin{picture}(280,184)(0,53)
  \put(0,95){\includegraphics[width=95pt]{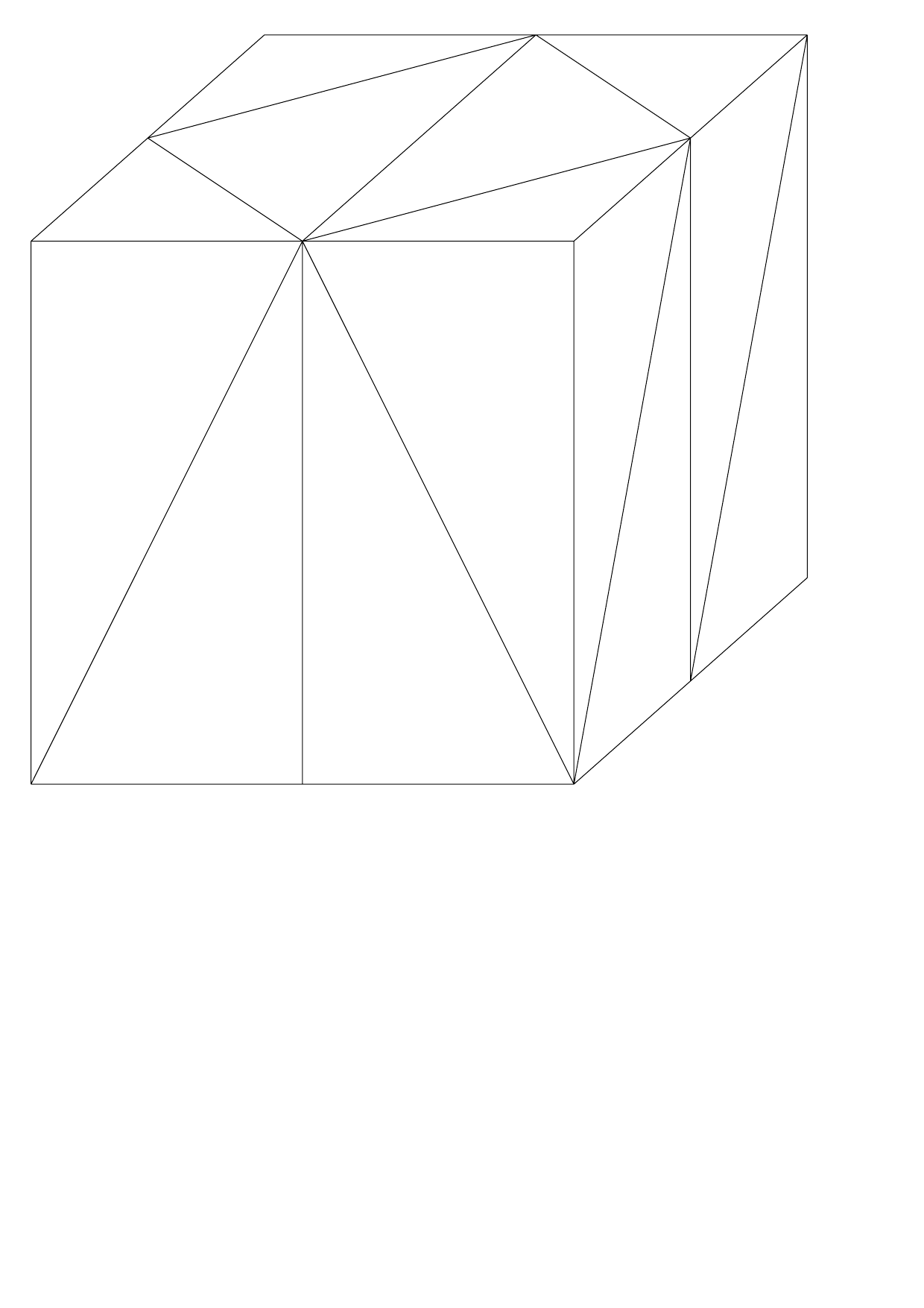}} 
  \put(95,95){\includegraphics[width=95pt]{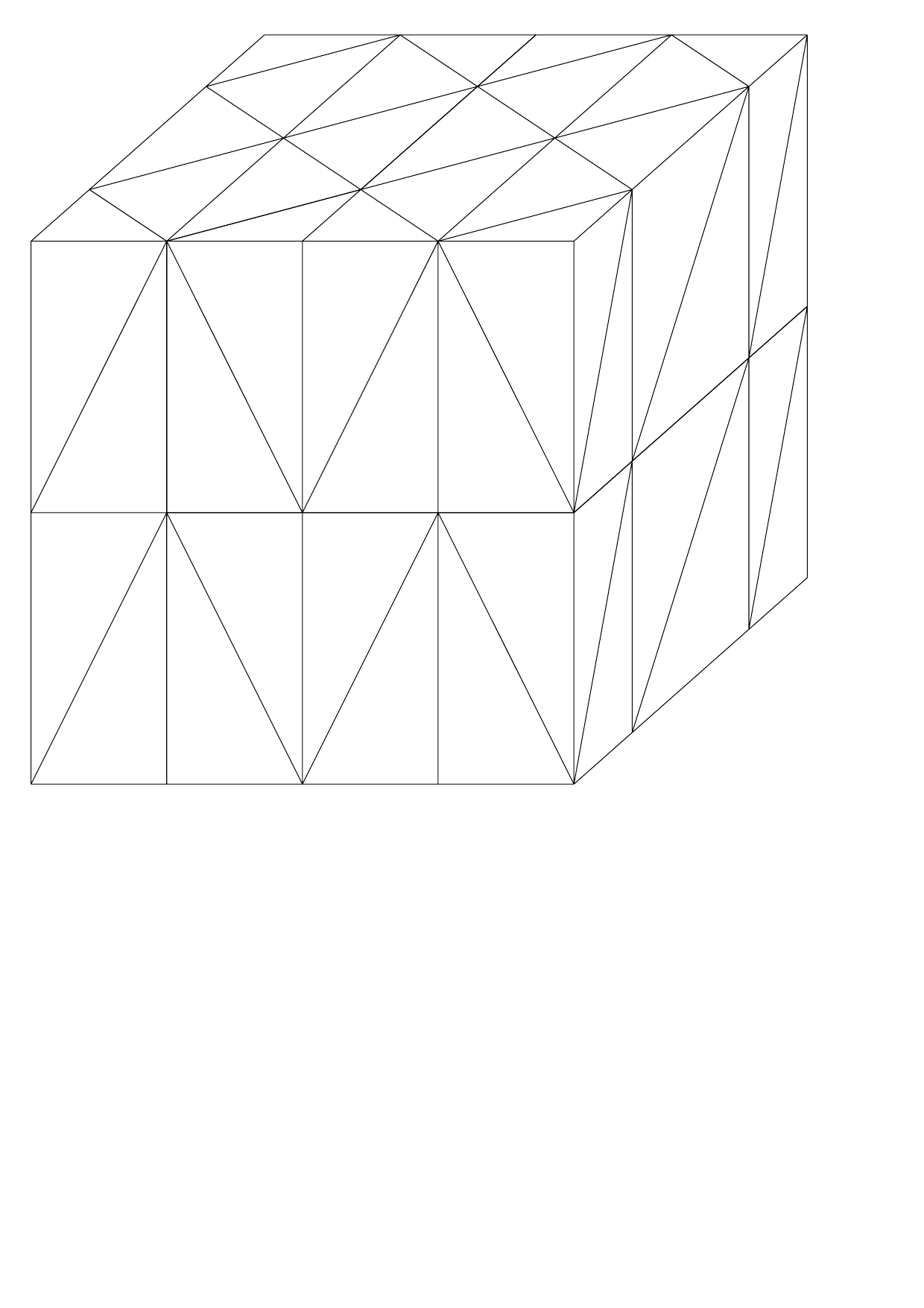}}
  \put(190,95){\includegraphics[width=95pt]{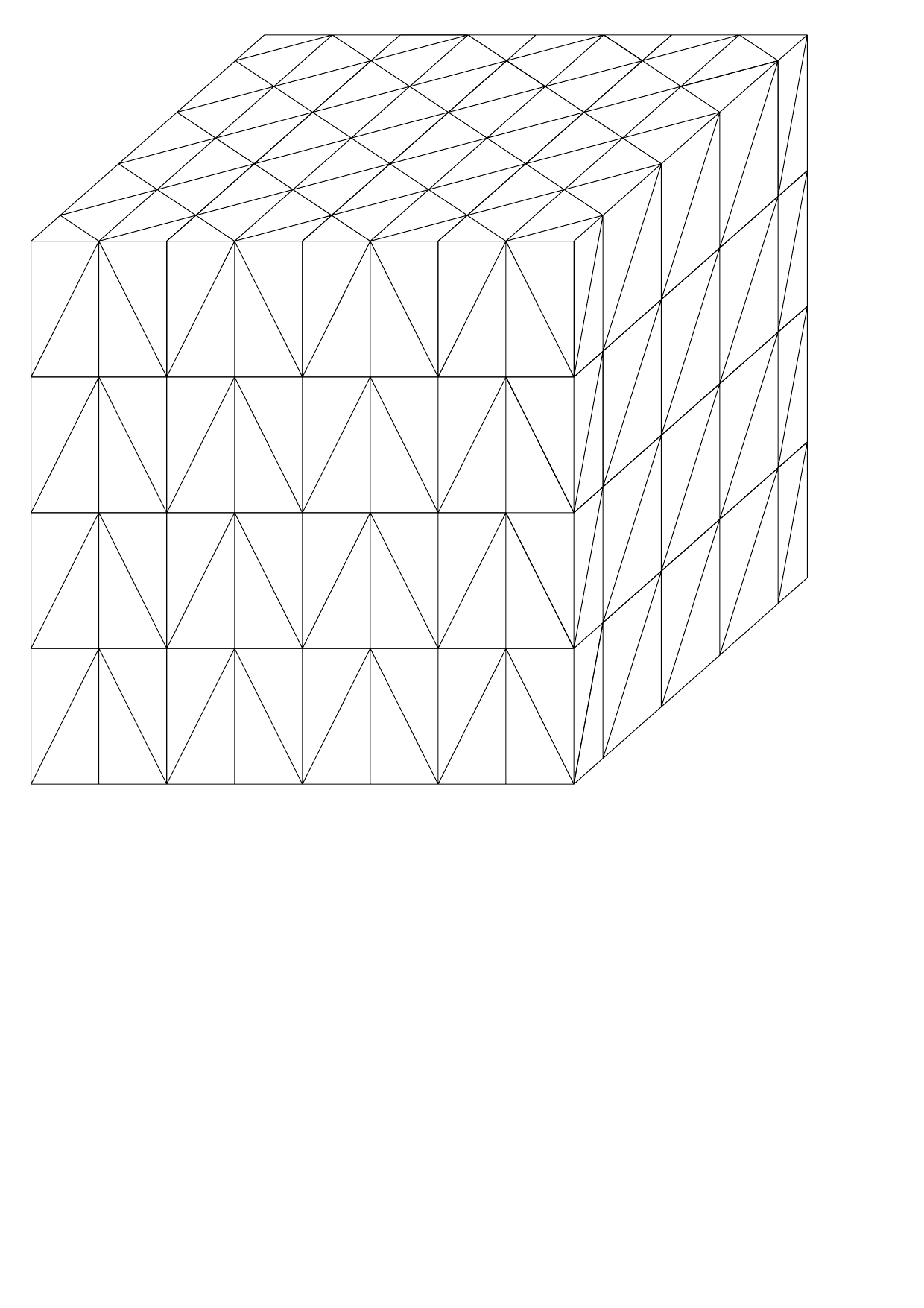}}
  \put(0, 0){\includegraphics[width=95pt]{grid311.pdf}} 
  \put(95, 0){\includegraphics[width=95pt]{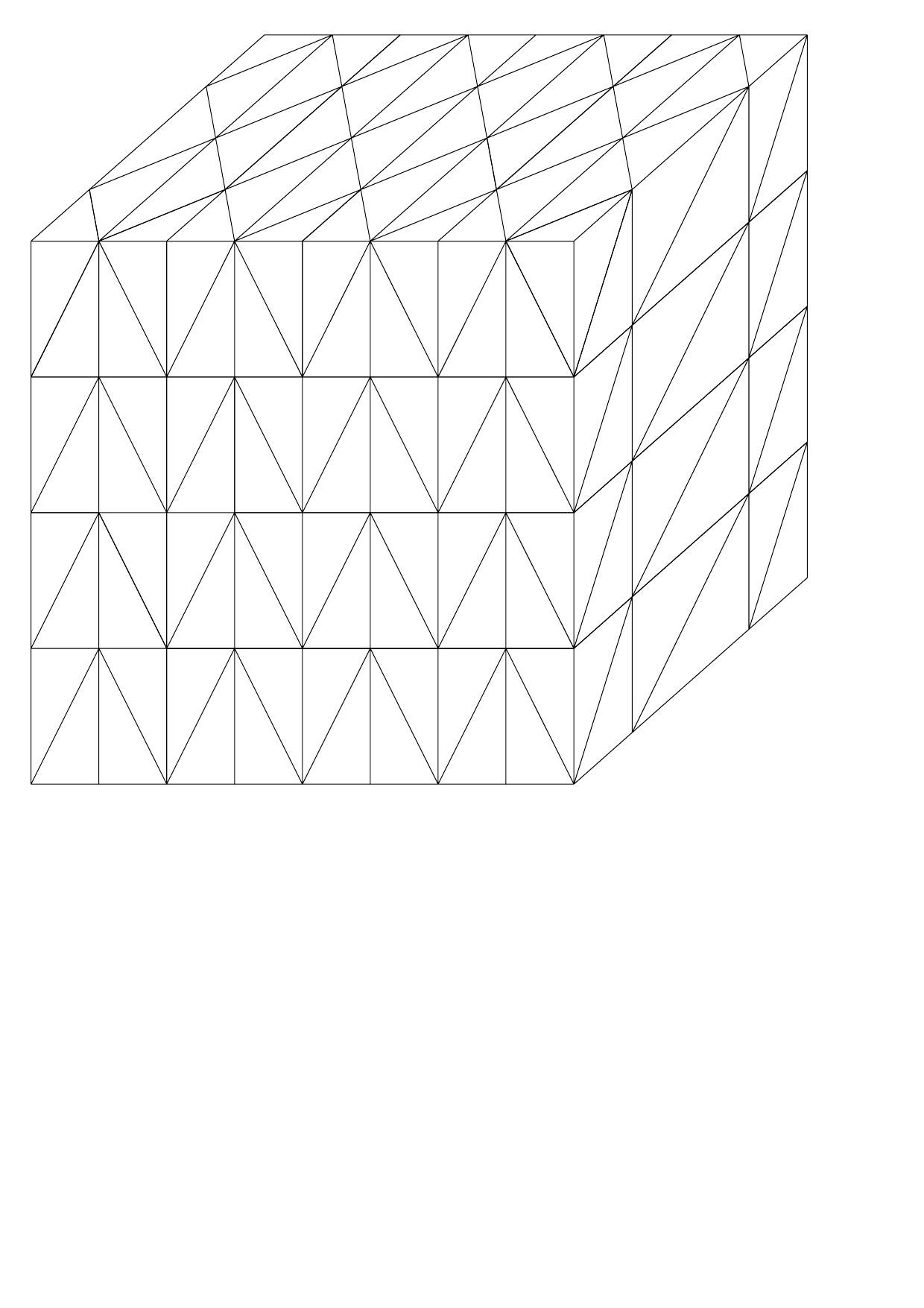}}
  \put(190, 0){\includegraphics[width=95pt]{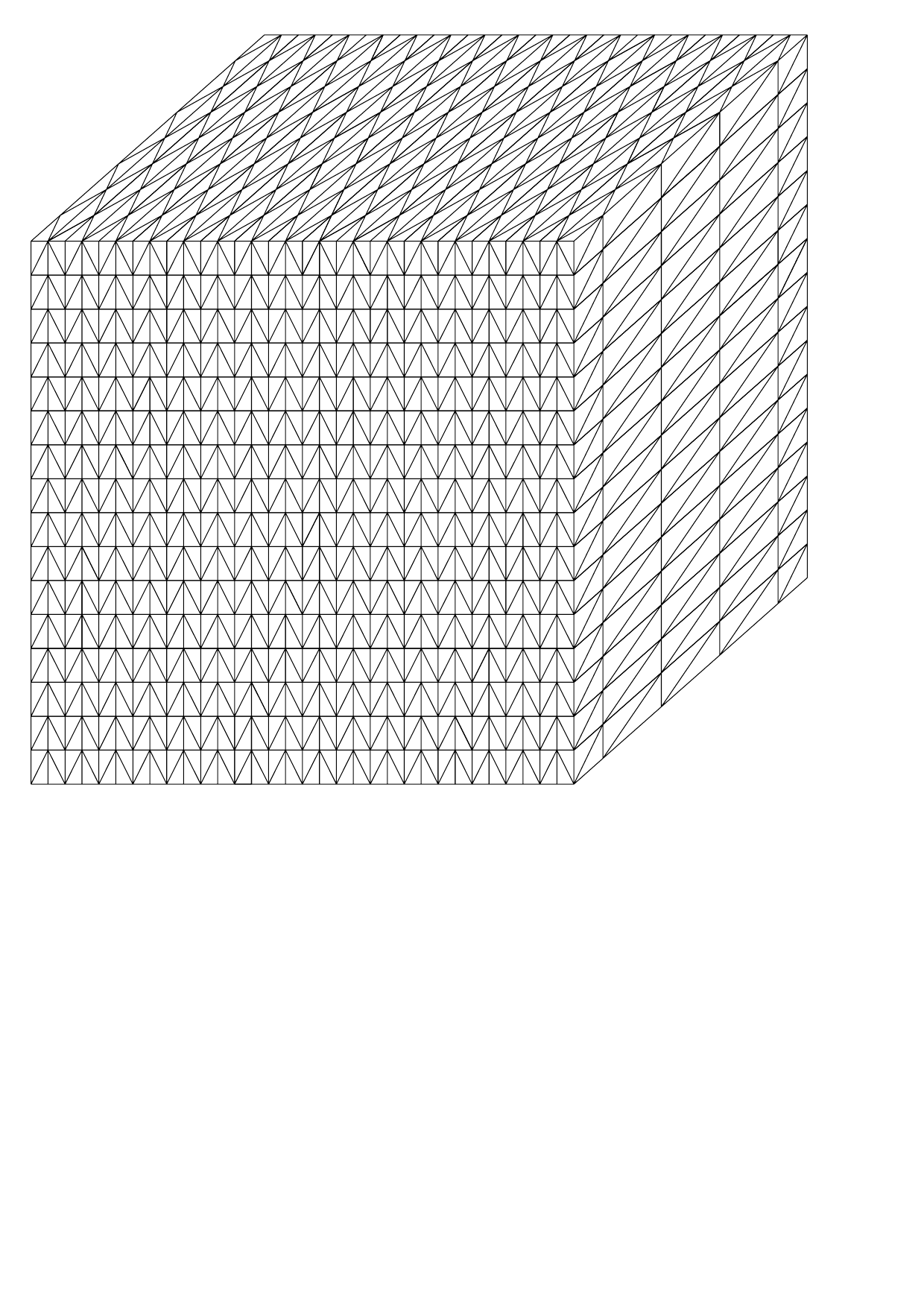}}
  
  \put(5,229){\small(A)}\put(5,134){\small(B)}
 \end{picture}\end{center}
\caption{(A) Quasi-uniform but non-nested 3D meshes.
  \quad (B) Degenerate (violates the maximum angle condition) 3D meshes. }
\label{f-g-3}
\end{figure}

In Table \ref{t1}, we list the result for the $P_1$ conforming finite element method in
  solving the Poisson equation \eqref{pde} with the exact solution \eqref{s-1}.
 We can see the method converges at the optimal order in $L^2$ norm, and superconvergent in
   semi-$H^1$ norm, on the quasi-uniform meshes, shown in Figure \ref{f-g-2}.
However, when we refine the mesh more in one direction, it is supposedly that
   we have better solutions.
But the $P_1$ conforming finite element solutions do not converge at all, in the second half
   of Table \ref{t1}.
The solutions converge to a different function, below the true solution \eqref{s-1}.

  \begin{table}[H]
  \caption{ Error profile for the 3D $P_1$ conforming 
      finite element in computing \eqref{s-2}.} \label{t3}
\begin{center}  
   \begin{tabular}{c|rr|rr}  
 \hline 
grid &  $ \|I_h\b u - \b u_h \|_{0}$ & $O(h^r)$ &  $ |I_h\b u - \b u_h |_{1}$ & $O(h^r)$   \\ \hline 
   &\multicolumn{4}{c}{On quasi-uniform meshes Figure \ref{f-g-3}(A). }\\ \hline 
 3&   0.461E-01 &1.14&   0.286E+00 &1.38 \\
 4&   0.139E-01 &1.73&   0.839E-01 &1.77 \\
 5&   0.368E-02 &1.91&   0.230E-01 &1.86 \\
 \hline 
 &\multicolumn{4}{c}{On degenerate meshes Figure \ref{f-g-3}(B).  }\\ \hline  
 2&   0.105E+00 &0.00&   0.756E+00 &0.00 \\
 3&   0.792E-01 &0.40&   0.578E+00 &0.39 \\
 4&   0.706E-01 &0.17&   0.560E+00 &0.05 \\
\hline 
\end{tabular} \end{center}  \end{table}

In Table \ref{t4}, we compute the 3D
   $P_1$ weak Galerkin finite element ($k=1$ in \eqref{V-h}) solutions for
   the exact solution \eqref{s-2}, on the meshes shown in Figure \ref{f-g-3}.
 We can see the method is two-order superconvergent in both
    $L^2$ norm and the energy norm, on both 
     the quasi-uniform meshes and the degenerate meshes in Figure \ref{f-g-3}.

  \begin{table}[H]
  \caption{ Error profile for the 3D $P_1$ weak Galerkin 
      finite element in computing \eqref{s-2}.} \label{t4}
\begin{center}  
   \begin{tabular}{c|rr|rr}  
 \hline 
grid &  $ \|I_h\b u - \b u_h \|_{0}$ & $O(h^r)$ &  $ |I_h\b u - \b u_h |_{1}$ & $O(h^r)$   \\ \hline 
   &\multicolumn{4}{c}{On quasi-uniform meshes Figure \ref{f-g-3}(A). }\\ \hline 
 3&   0.317E-03 &3.73&   0.131E-01 &2.84 \\
 4&   0.213E-04 &3.90&   0.171E-02 &2.94 \\
 5&   0.137E-05 &3.96&   0.218E-03 &2.97 \\
 \hline 
 &\multicolumn{4}{c}{On degenerate meshes Figure \ref{f-g-3}(B).  }\\ \hline  
 2&   0.109E-02 &5.54&   0.352E-01 &4.26 \\
 3&   0.475E-04 &4.53&   0.232E-02 &3.92 \\
 4&   0.293E-05 &4.02&   0.166E-03 &3.81 \\
\hline 
\end{tabular} \end{center}  \end{table}

\section{Ethical Statement}

\subsection*{Compliance with Ethical Standards} { \ }
 
   The submitted work is original and is not published elsewhere in any form or language.

This article does not contain any studies involving animals.
This article does not contain any studies involving human participants.

\subsection*{Availability of supporting data}  
   Data sharing is not applicable to this article since no datasets were generated or collected 
 in the work.

\subsection*{Competing interests} 
All authors declare that they have no potential conflict of interest.

\subsection*{Funding}

Ran Zhang was supported in part by China Natural National Science Foundation (22341302), 
the National Key R\&D Program of China \ (2023YFA 1008803 ).

\subsection*{Authors' contributions}
All authors made equal contribution.

\end{document}